\newcommand{\E}{\mathbb{E}}
\newtheorem{teo}{Theorem}
\newtheorem{pro}{Proposition}
\newtheorem{lem}{Lemma}
\newtheorem{cor}{Corollary}
\newtheorem*{rem}{Remark}
\title{The Hermitian Jacobi process: simplified formula for the moments and application to optical fibers MIMO channels}
\author[N. Demni]{Nizar Demni}
\address{IRMAR, Universit\'e de Rennes 1\\ Campus de
Beaulieu\\ 35042 Rennes cedex\\ France}
\email{nizar.demni@univ-rennes1.fr}
\author[T. Hamdi]{Tarek Hamdi}
\address{Department of Management Information Systems \\ College of Business Management \\ Qassim University \\ Ar Rass \\ Saudi Arabia
and Laboratoire d'Analyse Math\'ematiques et applications LR11ES11 \\ Universit\'e de Tunis El-Manar \\ Tunisie}
\email{t.hamdi@qu.edu.sa}
\author[A. Souissi]{Abdessatar Souissi}
\address{Department of Accounting, College of Business Management \\
Qassim University \\  Ar Rass \\ Saudi Arabia  and 
Preparatory institute for scientific and technical studies \\
Carthage University \\ Amilcar 1054 \\ Tunisia} 
\email{a.souaissi@qu.edu.sa}
\keywords{Unitary Brownian motion; Orthogonal projection; Jacobi Unitary Ensemble; Schur polynomials; Symmetric Jacobi polynomials; MIMO channels; Shannon capacity.}
\begin{document}

\maketitle

\begin{abstract}
Using a change of basis in the algebra of symmetric functions, we compute the moments of the Hermitian Jacobi process. After a careful arrangement of the terms and the evaluation of the determinant of an `almost upper-triangular' matrix, we end up with a moment formula which is considerably simpler than the one derived in \cite{Del-Dem}. As an application, we propose the Hermitian Jacobi process as a dynamical model for optical fibers MIMO channels and compute its Shannon capacity for small enough power at the transmitter. Moreover, when the size of the Hermitian Jacobi process is larger than the moment order, our moment formula may be written as a linear combination of balanced terminating ${}_4F_3$-series evaluated at unit argument.  
\end{abstract}

\tableofcontents

\section{Motivations}
The Hermitian Jacobi process was introduced in \cite{Dou} as a multidimensional analogue of the real Jacobi process. It is a stationary matrix-valued process whose distribution converges weakly in the large-time limit to the matrix-variate Beta distribution describing the Jacobi unitary ensemble (hereafter JUE). The latter was used in \cite{DFS} as a random matrix-model for a Multi-Input-Multi-Output (MIMO) optical fiber channel. There, numerical evidences for the Shannon capacity and for the outage probability were supplied and support the efficiency of the matrix model. From a general fact about unitarily-invariant matrix models, this capacity may be expressed through the Christoffel-Darboux kernel for Jacobi polynomials which is the one-point correlation function of the underlying eigenvalues process (\cite{Meh}). Yet another expression for it was recently obtained in \cite{DN} relying on a remarkable formula for the moments of the unitary selberg weight (\cite{CDLV}). The strategy employed in \cite{CDLV} was partially adapted in \cite{Del-Dem} to the Hermitian Jacobi process and led to a quite complicated formula for its moments which did not allow to derive their large-size limits. The main ingredients used in \cite{Del-Dem} were the expansion of Newton power sums in the basis of Schur functions, the determinantal form of the symmetric Jacobi polynomials and an integral form of the Cauchy-Binet formula (known as Andreief's identity).

In this paper, we follow another approach to compute the moments of the Hermitian Jacobi process based on a change of basis in the algebra of symmetric functions (with a fixed number of indeterminates). More precisely, we rather express the Newton power sums in the basis of symmetric Jacobi polynomials since the latter are mutually orthogonal with respect to the unitary Selberg weight. Doing so leads to the determinant of an `almost triangular' matrix which we express in a product form using row operations. After a careful rearrangement of the terms, we end up with a considerably simpler moment formula compared to the one obtained in \cite{Del-Dem} (Theorem 1). Actually, the latter involves three nested and alternating sums together with a determinant whose entries are Beta functions. Up to our best knowledge, this determinant has no closed form except in very few special cases. The moment formula obtained in this paper contains only two nested and alternating sums whose summands are ratios of Gamma functions. 

As a potential application of our formula, we propose the Hermitian Jacobi process as a dynamical analogue of the MIMO Jacobi channel studied in \cite{DFS} and compute its Shannon capacity for small power per-antenna at the transmitter. Motivated by free probability theory, we also give some interest in the case when the size of the Hermitian Jacobi process is larger than the moment order. In this respect, our moment formula may be written as a linear combination of terminating balanced ${}_4F_3$-hypergeometric series evaluated at unit argument (\cite{AAR}, Chapter 3). 

The paper is organized as follows. In the next section, we briefly review the construction of the Hermitian Jacobi process and recall the semi-group density of its eigenvalues process (when it exists). In the third section, we state our main result in Theorem \ref{teo}below and prove it. For ease of reading, we proceed in several steps until getting the sought moment formula. In the last section, we discuss the application of our main result to optical fibers MIMO channel and to the large-size limit of the moments of the Hermitian Jacobi process. 

\section{A review of the Hermitian Jacobi process}
For sake of completeness, we recall the construction of the Hermitian Jacobi process and the expression of the semi-group density of its eigenvalues process. We refer the reader to \cite{Dou} and \cite{Del-Dem} for further details.

Denote $U(d), d \geq 2,$ the group of complex unitary matrices. Let $p, m \leq d$ be two integers and let $Y = (Y_t)_{t \geq 0}$ be a $U(d)$-valued stochastic process. Set :
\begin{align*}
X_t \oplus 0 := PY_tQ, \quad t \geq 0,
\end{align*}
where :
\begin{align*}
P :=\begin{pmatrix}
\textrm{Id}_{m\times m} & 0_{m\times (d-m)}\\
0_{ (d-m)\times m} & 0_{(d-m)\times(d-m)}
\end{pmatrix}, \quad
Q :=\begin{pmatrix}
\textrm{Id}_{p\times p} & 0_{p\times (d-p)}\\
0_{ (d-p)\times p}& 0_{(d-p)\times(d-p)}
\end{pmatrix}.
\end{align*}
are orthogonal projections. In other words, $X$ is the upper left corner of $Y$. Assume now that $Y$ is the Brownian motion on $U(d)$ starting at the identity matrix. Then, 
\begin{equation*}
J_t := X_tX_t^* = PY_tQY_t^*P, \quad t \geq 0,
\end{equation*}
is called the Hermitian Jacobi process of size $m \times m$ and of parameters $(p,q)$ where $q=d-p$. As $t \rightarrow +\infty$, $Y_t \rightarrow Y_\infty,$ where $Y_\infty$ is a Haar unitary matrix and the convergence holds in the weak sense. Moreover, it was proved in \cite{Col} that the random matrix
 \begin{equation*}
 J_\infty=X_\infty X_\infty^* = PY_{\infty}QY_{\infty}^*P
 \end{equation*} 
  has the same distribution drawn from JUE with suitable parameters. 

For any $n \geq 1$, define the $n$-th moment of $J_t$ by:
\begin{align*}
M_{n, p, m, d}(t) :=\E \left(\textrm{tr}\left(\left(J_t\right)^n\right)\right),
\end{align*}
for fixed time $t \geq 0$ and write simply $M_n(t)$. Since the matrix Jacobi process is Hermitian, then
\begin{align*}
M_n(t) =\E \left(\sum_{k=1}^m (\lambda_k(t))^n\right),
\end{align*}
where $(\lambda_k(t), t \geq 0)_{k=1}^m$ is the eigenvalues process of $(J_t)_{t \geq 0}$ and $\mathbb{E}$ stands for the expectation of the underlying probability space. If
\begin{equation*}
r:= p-m \geq 0, \quad s:= d-p-m = q-m \geq 0,
\end{equation*}
then the distribution of the eigenvalues process is absolutely-continuous with respect to Lebesgue measure in $\mathbb{R}^m$. Besides, its semi-group density is given by a bilinear generating series of symmetric Jacobi polynomials with Jack parameter equals to $1$. More precisely, let
\begin{equation*}
\tau=(\tau_1 \geq \tau_2 \geq ... \geq \tau_m \geq 0)
\end{equation*}
be a partition of length at most $m$ and let $(P_k^{r,s})_{k\geq 0}$ be the sequence of orthonormal Jacobi polynomials with respect to the beta weight:
\begin{equation*}
u^r(1-u)^s {\bf 1}_{[0,1]}(u).
\end{equation*}
These polynomials may be defined through the Gauss hypergeometric function as: 
\begin{equation*}
P_k^{r,s}(u) := \left[\frac{(2k+r+s+1)\Gamma(k+r+s+1)k!}{\Gamma(r+k+1)\Gamma(s+k+1)}\right]^{1/2}\frac{(r+1)_k}{k!}{}_2F_1(-k, k+r+s+1, r+1; u). 
\end{equation*}
Then the orthonormal symmetric Jacobi polynomial corresponding to $\tau$ is defined by:
\begin{align*}
P_{\tau}^{r,s,m}(x_1,...,x_m) := \frac{\det(P_{\tau_i-i+m}^{r,s}(x_j))_{1\leq i,j \leq m}}{V(x_1,...,x_m)}, \quad V(x_1,...,x_m) := \prod_{1\leq i<j\leq m}(x_i-x_j),
\end{align*}
if the coordinates $(x_1,...,x_m)$ do not overlap and by L'H\^opital's rule otherwise. An expansion of these polynomials in the basis of Schur functions may be found in \cite{Las}. These polynomials are mutually orthonormal with respect to the unitary Selberg weight:
\begin{equation*}
W^{r,s,m}(y_1,\dots, y_m):= [V(y_1, \dots, y_m)]^2\prod_{i=1}^m y_i^r(1-y_i)^s{\bf 1}_{0<y_m<...<y_1<1},
\end{equation*}
in the sense that two elements corresponding to different partitions are orthogonal and the norm of each $P_{\tau}^{r,s,m}$ equals one (see e.g. \cite{BGU}, Theorem 3.1). Moreover, the semi-group density of the eigenvalues process of $J_t$ admits the following absolutely-convergent expansion (\cite{Dem01}):
\begin{align*}
G_t^{r,s,m}(1^m,y) := \sum_{\tau} e^{-\nu_\tau t}P_\tau^{r,s,m}(1^m)P_\tau^{r,s,m}(y) W^{r,s,m}(y_1,\dots, y_m),
\end{align*}
where $1^m := (\underbrace{1, \dots, 1}_{m \, \, \textrm{times}}),$ and
\begin{align*}
\nu_{\tau} :=\sum_{i=1}^m\tau_i(\tau_i+r+s+1+2(m-i)).
\end{align*}
If $(\tilde{P}_{k}^{r,s})_{k \geq 0}$ denotes the sequence of orthogonal Jacobi polynomials: 
\begin{equation*}
\tilde{P}_k^{r,s}(u) := \frac{(r+1)_k}{k!}{}_2F_1(-k, k+r+s+1, r+1; u),
\end{equation*}
then $G_t^{r,s,m}(1^m,y)$ may be written as:
\begin{align}\label{SD}
G_t^{r,s,m}(1^m,y) = \sum_{\tau} e^{-\nu_\tau t}\prod_{j=1}^m \frac{1}{(||\tilde{P}_{\tau_j+m-j}^{r,s}||_2)^2}\tilde{P}_\tau^{r,s,m}(1^m)\tilde{P}_\tau^{r,s,m}(y) W^{r,s,m}(y_1,\dots, y_m),
\end{align}
where $(||\tilde{P}_{\tau_j+m-j}^{r,s}||_2)^2$ is the squared $L^2$-norm of the one-variable Jacobi polynomial and
\begin{align*}
\tilde{P}_\tau^{r,s,m}(x_1,...,x_m) := \frac{\det(\tilde{P}_{\tau_i-i+m}^{r,s}(x_j))_{1\leq i,j \leq m}}{V(x_1,...,x_m)}.
\end{align*}
Indeed, Andreief's identity (\cite{DG}, p.37) shows that $(\tilde{P}_\tau^{r,s,m})_{\tau}$ is an orthogonal set with respect the unitary Selberg weight and that the squared $L^2$-norm of $\tilde{P}_\tau^{r,s,m}$ with respect to $W^{r,s,m}$ is nothing else but:
\begin{equation*}
\prod_{j=1}^m (||P_{\tau_j+m-j}^{r,s}||_2)^2.
\end{equation*}
On the other hand, the polynomial set $(\tilde{P}_\tau^{r,s,m})_{\tau}$ may be mapped to the set of symmetric Jacobi polynomials $(Q_{\tau}^{r,s,m})_{\tau}$ considered in \cite{Ols-Osi} by the affine transformation:
\begin{equation*}
(x_1,\dots, x_m) \in [0,1]^m \mapsto (1-2x_1, \dots, 1-2x_m) \in [-1,1]^m.
\end{equation*}
More precisely, one has:
\begin{equation*}
P_{\tau}^{r,s,m}(x_1,\dots,x_m) = (-2)^{m(m-1)/2}Q_{\tau}^{r,s,m}(1-2x_1,\dots,1-2x_m).
\end{equation*}
Moreover, the following mirror property is satisfied by $(Q_{\tau}^{r,s,m})_{\tau}$:
\begin{equation*}
Q_{\tau}^{r,s,m}(-x_1,\dots,-x_m) = (-1)^{|\tau|}Q_{\tau}^{s,r,m}(x_1,\dots,x_m),
\end{equation*}
and is inherited from their one-variable analogues. Indeed, one checks directly this property when $x$ has distinct coordinates using the determinantal form of $Q_{\tau}^{r,s,m}$ then extends it by continuity. In particular:
\begin{equation*}
P_{\tau}^{r,s,m}(1^m) = (-2)^{m(m-1)/2}Q_{\tau}^{r,s,m}(\underbrace{-1, \dots, -1}_{m \, \textrm{times}}) = (-1)^{|\tau|}(-2)^{m(m-1)/2}Q_{\tau}^{s,r,m}(1^m).
\end{equation*}
But Proposition 7.1 in \cite{Ols-Osi} gives:
\begin{multline*}
Q_{\tau}^{s,r,m}(1^m) = \prod_{1 \leq i < j \leq m}(\tau_i+\tau_j+2m-i-j+r+s+1)(\tau_i-\tau_j + j-i)
\\ \prod_{j=1}^m \frac{\Gamma(\tau_j+m-j+s+1)2^{-(m-j)}}{\Gamma(\tau_j+m-j+1)\Gamma(m-j+s+1)\Gamma(m-j+1)}.
\end{multline*}
As a result, we get the special value:
\begin{multline}\label{SV}
\tilde{P}_\tau^{r,s,m}(1^m) = (-1)^{|\tau|+m(m-1)/2} \prod_{1 \leq i < j \leq m}(\tau_i+\tau_j+2m-i-j+r+s+1)(\tau_i-\tau_j + j-i)
\\ \prod_{j=1}^m \frac{\Gamma(\tau_j+m-j+s+1)}{\Gamma(\tau_j+m-j+1)\Gamma(m-j+s+1)\Gamma(m-j+1)},
\end{multline}
which will be used in our forthcoming computations below.

\section{Main result: The moment formula}
Let $n \geq 1$ and recall that a hook $\alpha$ of weight $|\alpha| = n$ is a partition of the form: 
\begin{equation*}
\alpha = (n-k, 1^k). 
\end{equation*}
For partitions $\alpha, \tau$, recall the order induced by the containment of their Young diagrams: $\tau \subseteq \alpha$ if and only if $\tau_i \leq \alpha_i$ for any $1 \leq i \leq l(\tau) \leq l(\alpha)$, where the length $l(\tau)$ is the number of non-zero components of $\tau$. 
On the other hand, the $n$-th moment of the stationary distribution $J_{\infty}$ is given by the normalized integral\footnote{As for fixed $t > 0$, we omit the dependence of the stationary moments on $(r,s,m)$.}:
\begin{align*}
M_n(\infty) &:= \frac{1}{Z^{r,s,m}}\int \left( \sum_{i=1}^m y_i^n \right) W^{r,s,m}(y)dy 
\\& = \frac{1}{m!Z^{r,s,m}}\int_{[0,1]^m} \left( \sum_{i=1}^m y_i^n \right)  [V(y_1, \dots, y_m)]^2\prod_{i=1}^m y_i^r(1-y_i)^s dy,
\end{align*}
where 
\begin{equation*}
Z^{r,s,m}:= \int W^{r,s,m}(y)dy,
\end{equation*}
is the Selberg integral. The explicit expression of $M_n(\infty)$ may be read off Corollary 2.3 in \cite{CDLV}. With these notations, our main result is stated as follows:
\begin{teo}\label{teo}
The $n$-th moment of the Hermitian Jacobi process is given by:
\begin{equation}\label{FinalForm}
M_n(t) = M_n(\infty) + \sum_{\substack{\alpha \, \, \textrm{hook} \\ |\alpha| = n, l(\alpha) \leq m}} (-1)^{n-\alpha_1}  \sum_{\substack{\tau \subseteq \alpha \\ \tau \neq \emptyset}}
\frac{e^{-\nu_\tau t} \, \tilde{V}_{\alpha_1, \tau_1}^{r,s,m} \, U_{l(\alpha), l(\tau)}^{r,s,m} }{(r+s+\tau_1+2m-l(\tau)) (\tau_1+l(\tau)-1)} 
\end{equation}
where 
\begin{align*}
\tilde{V}_{\alpha_1, \tau_1}^{r,s,m} 
 = \frac{(r+s+2\tau_1+2m-1)\Gamma(\tau_1+2m+r+s)\Gamma(\alpha_1+m)\Gamma(r+\alpha_1+m)\Gamma(\tau_1+m+s)}{(\alpha_1-\tau_1)!(\tau_1-1)!\Gamma(r+s+\alpha_1+\tau_1+2m)\Gamma(r+\tau_1+m)},
\end{align*}
and 
\begin{align*}
U_{l(\alpha), l(\tau)}^{r,s,m} : = \frac{(2m+r+s+1-2l(\tau))\Gamma(r+m-l(\tau)+1)\Gamma(r+s+2m-l(\alpha)-l(\tau)+1)}{(l(\alpha)- l(\tau))!(l(\tau)-1)!\Gamma(m-l(\alpha)+1)\Gamma(r+m-l(\alpha)+1)\Gamma(m+s-l(\tau)+1)\Gamma(2m+r+s-l(\tau)+1)}.
\end{align*}
\end{teo}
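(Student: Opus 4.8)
\section*{Proof proposal}

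The plan is to read the moment off the semi-group density \eqref{SD}. Writing
\[
M_n(t) = \int p_n(y)\, G_t^{r,s,m}(1^m,y)\,dy, \qquad p_n(y) := \sum_{i=1}^m y_i^n,
\]
and integrating term by term against the bilinear expansion \eqref{SD}, orthogonality of the $\tilde P_\tau^{r,s,m}$ with respect to $W^{r,s,m}$ reduces everything to computing, for each partition $\tau$, the coefficient $\langle p_n, \tilde P_\tau^{r,s,m}\rangle_{W}$. The term $\tau=\emptyset$ has $\nu_\emptyset=0$ and reproduces exactly $M_n(\infty)$, so I would peel it off at the start; every remaining term carries the factor $e^{-\nu_\tau t}$. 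The first genuine step is the classical expansion of the Newton power sum into hook Schur functions, $p_n = \sum_{k=0}^{n-1}(-1)^k s_{(n-k,1^k)}$, which immediately produces the outer sum over hooks $\alpha=(n-k,1^k)$ with sign $(-1)^{n-\alpha_1}$ and the truncation $l(\alpha)\le m$ (Schur functions in $m$ variables vanish for longer shapes).

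Next I would insert the bialternant formula $s_\alpha = \det(x_j^{\alpha_i+m-i})/V$ together with the determinantal form of $\tilde P_\tau^{r,s,m}$. The two Vandermonde factors cancel the $V^2$ in $W^{r,s,m}$, and Andreief's identity collapses the integral to a single $m\times m$ determinant
\[
\langle s_\alpha, \tilde P_\tau^{r,s,m}\rangle_{W} = \det\!\left(\int_0^1 x^{\alpha_i+m-i}\,\tilde P_{\tau_j+m-j}^{r,s}(x)\,x^r(1-x)^s\,dx\right)_{1\le i,j\le m}.
\]
Because each one-variable Jacobi polynomial is orthogonal to all lower degrees, the $(i,j)$ entry vanishes when $\alpha_i+m-i < \tau_j+m-j$. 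Since the two sequences $(\alpha_i+m-i)_i$ and $(\tau_j+m-j)_j$ are strictly decreasing, a pigeonhole/matching argument shows the determinant is zero unless $\alpha_i\ge\tau_i$ for all $i$, i.e. unless $\tau\subseteq\alpha$; as $\alpha$ is a hook this also forces $\tau$ to be a (possibly empty) hook. This is precisely the inner summation range $\tau\subseteq\alpha$ in \eqref{FinalForm}.

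The computational heart is evaluating this determinant for two hooks. I would first record the scalar integrals $\int_0^1 x^a \tilde P_b^{r,s}(x)\,x^r(1-x)^s\,dx$ as explicit ratios of Gamma functions, vanishing for $a<b$. Because $\tau_1\ge 1$ for $\tau\neq\emptyset$, the first column of the matrix is nonzero only in its top entry, so a cofactor expansion along it splits off the factor carrying the $(\alpha_1,\tau_1)$-dependence, which will assemble into $\tilde V_{\alpha_1,\tau_1}^{r,s,m}$. The surviving $(m-1)\times(m-1)$ minor has row- and column-degree sets equal to the staircase $\{0,1,\dots,m-1\}$ with a single entry deleted (at $m-l(\alpha)$ and $m-l(\tau)$ respectively); since $l(\tau)\le l(\alpha)$, it is upper triangular apart from a short sub-diagonal band, i.e. the `almost upper-triangular' matrix. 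Factoring out the row- and column-common Gamma factors and clearing the band by elementary row operations so the entries telescope leaves a product of diagonal terms, which is $U_{l(\alpha),l(\tau)}^{r,s,m}$.

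Finally I would reassemble: multiply the determinant by the one-variable norms $\prod_j \|\tilde P_{\tau_j+m-j}^{r,s}\|_2^{-2}$ from \eqref{SD} and by the special value \eqref{SV} of $\tilde P_\tau^{r,s,m}(1^m)$, both explicit products of Gamma functions indexed by the hook $\tau$, and track the signs so that the $(-1)^{|\tau|+m(m-1)/2}$ in \eqref{SV} cancels the sign generated in the determinant, leaving the manifestly positive factors $\tilde V$ and $U$. The remaining cancellation collapses the product into the stated ratio, the two denominators $(r+s+\tau_1+2m-l(\tau))$ and $(\tau_1+l(\tau)-1)$ surviving as the boundary terms of the telescoped norm and special-value products. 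I expect the main obstacle to be this determinant evaluation: choosing the row operations so the Gamma-ratios telescope to a product, and keeping the numerous shifts in $(r,s,m)$ coming from the norms and from \eqref{SV} consistent, is where the bookkeeping is delicate and where errors are most likely to creep in.
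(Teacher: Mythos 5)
Your plan follows essentially the same route as the paper: hook expansion of $p_n$, reduction to a single determinant of one-variable Jacobi moment integrals, the support condition $\tau\subseteq\alpha$ from degree vanishing, the almost upper-triangular structure with the nontrivial band between $l(\tau)$ and $l(\alpha)$, and its evaluation by row operations. The only (cosmetic) difference is that the paper obtains the determinant by first expanding $s_\alpha$ in the symmetric Jacobi basis via the Koekoek--Koekoek inversion formula and Olshanski's change-of-basis result and then invoking orthogonality, whereas you apply Andreief's identity directly to $\langle s_\alpha,\tilde P_\tau^{r,s,m}\rangle_W$; the resulting matrices agree up to column normalizations, so the subsequent bookkeeping is the same.
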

The rest of this section is devoted to the proof of this result. Due to lengthy computations, we shall proceed in several steps where in each step, we simplify the moment expression obtained in the previous one. 
\subsection{The basis change}
We start with performing the change of basis from Schur polynomials to symmetric Jacobi polynomials. Doing so leads to the following formula for $M_n(t)$ : 
\begin{pro}
For any $m, n \geq 1, t  > 0$, we have:
\begin{multline}\label{MF1}
M_n(t)=\sum_{\substack{\alpha \, \, \textrm{hook} \\ |\alpha| = n, l(\alpha) \leq m}} (-1)^{n-\alpha_1} \sum_{\tau \subseteq \alpha}
e^{-\nu_\tau t}\tilde{P}_\tau^{r,s,m}(1)\det \left( \frac{(-(\alpha_i-i+m))_{\tau_j-j+m}}{\Gamma(r+s+\tau_j-j+\alpha_i-i+2m+2)} \right)_{i,j=1}^m
\\ \times \prod_{j=1}^m\frac{(r+s+2(\tau_j-j+m)+1)\Gamma(r+\alpha_j+m-j+1)\Gamma(r+s+\tau_j-j+m+1)}{\Gamma(r+\tau_j-j+m+1)},
\end{multline}
where
\begin{equation*}
(-(\alpha_i-i+m))_{\tau_j-j+m} = (-1)^{\tau_j + m-j} \frac{(\alpha_i+m-i)!}{(\alpha_i-\tau_j + j- i)!}{\bf 1}_{\alpha_i-i \geq \tau_j-j}.
\end{equation*}
\end{pro}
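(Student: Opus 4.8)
The plan is to start from the spectral expansion \eqref{SD} of the semi-group density and integrate the Newton power sum $p_n(y) = \sum_{i=1}^m y_i^n$ against it term by term. Writing $k_j := \tau_j - j + m$ throughout, this produces
\begin{equation*}
M_n(t) = \sum_\tau e^{-\nu_\tau t}\,\tilde P_\tau^{r,s,m}(1^m)\prod_{j=1}^m \frac{1}{(\|\tilde P_{k_j}^{r,s}\|_2)^2}\int p_n(y)\,\tilde P_\tau^{r,s,m}(y)\,W^{r,s,m}(y)\,dy.
\end{equation*}
The first ingredient is the classical expansion of a power sum into hook Schur functions, $p_n = \sum_{k=0}^{n-1}(-1)^k s_{(n-k,1^k)}$ (Murnaghan--Nakayama); in $m$ variables only hooks of length $\le m$ survive, and with $\alpha = (n-k,1^k)$ the sign is $(-1)^{n-\alpha_1}$. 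Thus everything reduces to evaluating the integrals $\int s_\alpha\,\tilde P_\tau^{r,s,m}\,W^{r,s,m}$.

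Next I would linearize these integrals. Writing $s_\alpha(y) = \det(y_j^{\alpha_i-i+m})/V(y)$ and $\tilde P_\tau^{r,s,m}(y) = \det(\tilde P_{k_i}^{r,s}(y_j))/V(y)$, the two Vandermonde denominators cancel the factor $[V(y)]^2$ inside $W^{r,s,m}$, so the integrand becomes the product of two $m\times m$ determinants times the symmetric weight $\prod_i y_i^r(1-y_i)^s$. As this is symmetric, integration over the ordered simplex equals $1/m!$ times the integral over $[0,1]^m$, and Andreief's identity collapses the latter to $m!\,\det(I(a_i,k_j))_{i,j=1}^m$, where $a_i := \alpha_i - i + m$ and
\begin{equation*}
I(a,k) := \int_0^1 u^{a+r}(1-u)^s\,\tilde P_k^{r,s}(u)\,du.
\end{equation*}
The factor $m!$ cancels, leaving $\det(I(a_i,k_j))$.

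The heart of the computation is the one-variable integral $I(a,k)$. Since $u^a$ has degree $a$ and $\tilde P_k^{r,s}$ is orthogonal to all lower-degree polynomials for the weight $u^r(1-u)^s$, the integral vanishes when $a<k$, which produces the indicator $\mathbf 1_{\alpha_i-i\ge \tau_j-j}$. For $a\ge k$ I would insert the hypergeometric series of $\tilde P_k^{r,s}$ and integrate termwise via the Beta integral; the resulting terminating ${}_3F_2(-k,k+r+s+1,a+r+1;r+1,a+r+s+2;1)$ is Saalschützian, so Pfaff--Saalschütz (equivalently, a $k$-fold integration by parts through the Rodrigues formula) gives
\begin{equation*}
I(a,k) = \frac{\Gamma(s+k+1)\,\Gamma(r+a+1)}{k!}\,\frac{(-a)_k}{\Gamma(r+s+a+k+2)}.
\end{equation*}
The two Gamma factors split off as column ($j$) and row ($i$) prefactors of the determinant: $\prod_i \Gamma(r+\alpha_i-i+m+1)$ recovers the factor $\prod_j\Gamma(r+\alpha_j+m-j+1)$, while $\prod_j \Gamma(s+k_j+1)/k_j!$, multiplied by the orthonormalization constant $1/(\|\tilde P_{k_j}^{r,s}\|_2)^2 = (2k_j+r+s+1)\Gamma(k_j+r+s+1)k_j!/[\Gamma(r+k_j+1)\Gamma(s+k_j+1)]$, simplifies factor by factor to exactly the product displayed in \eqref{MF1}; the surviving entries $(-a_i)_{k_j}/\Gamma(r+s+a_i+k_j+2)$ form the determinant in \eqref{MF1}.

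Finally I would justify restricting the $\tau$-sum to $\tau\subseteq\alpha$. The $(i,j)$ entry of the reduced matrix vanishes unless $a_i\ge k_j$; since the strictly decreasing sequences $(a_i)$ and $(k_j)$ are sorted, a rearrangement (Hall-type) argument shows that a permutation $\sigma$ with $a_i\ge k_{\sigma(i)}$ for all $i$ can exist only if $a_i\ge k_i$ for every $i$, i.e. $\alpha_i\ge\tau_i$. Hence the determinant is zero whenever $\tau\not\subseteq\alpha$, and the sum may be restricted accordingly. I expect the main obstacle to be the evaluation of $I(a,k)$ together with the careful matching of all Gamma-factors once the row/column prefactors are merged with the norms; by comparison the hook expansion, the Andreief reduction, and the vanishing argument are routine.
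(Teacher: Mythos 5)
Your argument is correct and arrives at \eqref{MF1} with every factor matching. Structurally it parallels the paper's proof (hook expansion of $p_n$ into Schur functions, Andreief's identity, orthogonality of the $\tilde P_\tau^{r,s,m}$ with respect to $W^{r,s,m}$), but it differs in one genuine respect: where the paper first expands $s_\alpha$ in the basis of symmetric Jacobi polynomials by invoking the one-variable Jacobi inversion formula of \cite{KK} together with Olshanski's multivariate change-of-basis lemma (\cite{Ols}, Proposition 3.1), and only then uses orthogonality to extract the coefficient of $\tilde P_\tau^{r,s,m}$, you compute the pairing $\int s_\alpha\,\tilde P_\tau^{r,s,m}\,W^{r,s,m}$ head-on: Andreief reduces it to $\det(I(a_i,k_j))$ and the one-variable integral $I(a,k)$ is evaluated in closed form. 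Your evaluation is right --- the ${}_3F_2$ you obtain is indeed Saalsch\"utzian, and Pfaff--Saalsch\"utz gives $I(a,k)=\Gamma(s+k+1)\Gamma(r+a+1)(-a)_k/\bigl(k!\,\Gamma(r+s+a+k+2)\bigr)$ --- and it is of course equivalent to the cited inversion formula, since the coefficient of $\tilde P_k^{r,s}$ in $y^a$ is exactly $I(a,k)/\|\tilde P_k^{r,s}\|_2^2$; the two routes are dual to one another. What your version buys is self-containedness (no appeal to \cite{KK} or \cite{Ols}) at the price of having to justify separately the restriction of the $\tau$-sum to $\tau\subseteq\alpha$: your rearrangement argument (if $a_i<k_i$ for some $i$, the rows $i,\dots,m$ of the matrix are supported on columns $i+1,\dots,m$ and are therefore linearly dependent) is a valid way to see that the determinant vanishes, whereas in the paper this restriction comes for free from the support of Olshanski's expansion. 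The bookkeeping of the row and column Gamma factors against the normalization $1/\|\tilde P_{k_j}^{r,s}\|_2^2$ also checks out and reproduces the product displayed in \eqref{MF1}.
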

\begin{proof}
Recall the $n$-th Newton power sum (\cite{MD}):
\begin{align*}
p_n(y) := \sum_{i=1}^m y_i^n,
\end{align*}
as well as the Schur polynomials associated to a partition $\tau$ of length $l(\tau) \leq m$ (\cite{MD}):
\begin{equation*}
s_{\tau}(x) := \frac{\det(x_j^{\tau_i-i+m})_{1\leq i,j \leq m}}{V(x_1,...,x_m)}. 
\end{equation*}
These symmetric functions are related by the representation-theoretical formula (see e.g. \cite{MD}, p. 48):
\begin{align*}
p_n(y) = \sum_{\substack{\alpha \, \, \textrm{hook} \\ |\alpha| = n, l(\alpha) \leq m}} (-1)^{n-\alpha_1} s_{\alpha}(y).
\end{align*}
In order to integrate the Newton sum against the semi-group density \eqref{SD}, we shall further expand the Schur polynomials in the basis of symmetric Jacobi polynomials $(\tilde{P}_\tau^{r,s,m})_{\tau}$. To this end, we appeal to the inversion formula (\cite{KK}):
\begin{align*}
y^j=\Gamma(r+j+1)\sum_{l=0}^j \frac{(-j)_l(r+s+2l+1)\Gamma(r+s+l+1)}{\Gamma(r+l+1)\Gamma(r+s+l+j+2)} \tilde{P}_l^{r,s}(y),
\end{align*}
together with Proposition 3.1 in \cite{Ols} (change of basis formula). Doing so yields:
\begin{align*}
s_\alpha(y) = \prod_{i=1}^m\Gamma(r+\alpha_i+m-i+1)\sum_{\mu \subset \alpha} \det \left(a(\alpha_i-i+m,\mu_j-j+m)\right)_{1\leq i,j\leq m} \tilde{P}_\mu^{r,s,m}(y),
\end{align*}
where we set:
\begin{multline*}
a(\alpha_i-i+m,\mu_j-j+m):= (-(\alpha_i-i+m))_{\mu_j-j+m} \\ \frac{(r+s+2(\mu_j-j+m)+1)\Gamma(r+s+\mu_j-j+m+1)}{\Gamma(r+\mu_j-j+m+1)\Gamma(r+s+\mu_j-j+\alpha_i-i+2m+2)}.
\end{multline*}
Integrating $y \mapsto p_n(y)G_t^{r,s,m}(1^m,y)$ and applying Fubini Theorem, we are led to:
\begin{align*}
\int \left( \sum_{i=1}^m y_i^n \right) \tilde{P}_\tau^{r,s,m}(y) & W^{r,s,m}(y)dy = \frac{1}{m!}\int_{[0,1]^m} \left( \sum_{i=1}^m y_i^n \right) \tilde{P}_\tau^{r,s,m}(y)  [V(y_1, \dots, y_m)]^2\prod_{i=1}^m y_i^r(1-y_i)^s dy
\\& = \sum_{\substack{\alpha \, \, \textrm{hook} \\ |\alpha| = n, l(\alpha) \leq m}} (-1)^{n-\alpha_1}  \sum_{\mu \subset \alpha} \det \left(a(\alpha_i-i+m,\mu_j-j+m)\right)_{1\leq i,j\leq m}
\\& \frac{1}{m!} \int_{[0,1]^m}  \tilde{P}_\mu^{r,s}(y) \tilde{P}_\tau^{r,s,m}(y)  [V(y_1, \dots, y_m)]^2\prod_{i=1}^m y_i^r(1-y_i)^s dy
\\& =\sum_{\substack{\alpha \, \, \textrm{hook} \\ |\alpha| = n, l(\alpha) \leq m}} (-1)^{n-\alpha_1} \sum_{\tau \subset \alpha} \prod_{j=1}^m (||P_{\tau_j+m-j}^{r,s}||_2)^2 \det \left(a(\alpha_i-i+m,\tau_j-j+m)\right)_{1\leq i,j\leq m},
\end{align*}
where the last equality follows again from Andreief's identity. Keeping in mind the series expansion \eqref{SD}, the stated moment formula follows.
\end{proof}

\subsection{An almost upper-triangular matrix}
For sake of simplicity, we introduce the following notations :
\begin{align*}
n_i=\alpha_i+m-i, \quad m_i=\tau_i +m-i.
\end{align*}
Using \eqref{SV}, the moment formula \eqref{MF1} is written more explicitly as:
\begin{multline}\label{MF2}
M_n(t) = \sum_{\substack{\alpha \, \, \textrm{hook} \\ |\alpha| = n, l(\alpha) \leq m}} (-1)^{n-\alpha_1} \sum_{\tau \subseteq \alpha} e^{-\nu_\tau t} \prod_{1 \leq i < j \leq m}(m_i+m_j+r+s+1)(m_i-m_j)
\\ (-1)^{|\tau|+m(m-1)/2}  \prod_{j=1}^m\frac{(r+s+2m_j+1)\Gamma(r+n_j+1)\Gamma(m_j+s+1)\Gamma(r+s+m_j+1)}{\Gamma(r+m_j+1)\Gamma(m_j+1)\Gamma(m-j+s+1)\Gamma(m-j+1)}
 \\ \det \left( \frac{(-n_i)_{m_j}}{\Gamma(r+s+n_i+m_j+2)} \right)_{i,j=1}^m.
\end{multline}
Since $\alpha_i = \tau_j$ for $i, j > l(\alpha), 2 \leq i,j \leq l(\tau),$ then $n_i < m_j$ provided that $i > j$. Similarly, $1=\alpha_i > \tau_j=0, l(\tau)+1 \leq i,j \leq l(\alpha)-1,$ implies the same conclusion when $i > j+1$. These elementary observations show that the matrix above is `almost upper-triangular'. 

\begin{lem}\label{Upper}
For any hook $\alpha$ of weight $n \geq 1$ and length $l(\alpha) \leq m$, and any $\tau \subset \alpha$, set:
\begin{equation*}
b_{\alpha, \tau}(i,j) := \frac{(-n_i)_{m_j}}{\Gamma(r+s+n_i+m_j+2)}, \quad B_{\alpha, \tau} := \left(b_{\alpha,\tau}(i,j)\right)_{i,j=1}^m.
\end{equation*}
Then $b_{\alpha, \tau}(i,j) = 0$ for $i \geq j+2,$ and:
\begin{itemize}
\item If $l(\alpha) < l(\tau) + 2$ then $B_{\alpha, \tau}$ is upper triangular.
\item Otherwise, $b_{\alpha, \tau}(j+1,j) = 0$ for $j \geq l(\alpha)$ or $j \leq l(\tau)$ while
\begin{equation*}
b_{\alpha, \tau}(j+1,j) = \frac{(-1)^{m-j}(m-j)!}{\Gamma(r+s+2m-2j+2)}, \quad l(\tau)+1 \leq j \leq l(\alpha)-1, \quad \tau \neq \emptyset.
\end{equation*}
\end{itemize}
\end{lem}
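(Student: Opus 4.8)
The plan is to read off both the vanishing and the value of each entry directly from the Pochhammer symbol $(-n_i)_{m_j}$. Since $n_i = \alpha_i + m - i \geq m - i \geq 0$ for $1 \le i \le m$, the factorisation recalled in the Proposition gives $(-n_i)_{m_j} = (-1)^{m_j}\,n_i!/(n_i - m_j)!$ when $n_i \ge m_j$ and $(-n_i)_{m_j} = 0$ otherwise. Hence $b_{\alpha,\tau}(i,j) = 0$ if and only if $n_i < m_j$, i.e. if and only if $\alpha_i - \tau_j < i - j$. Everything therefore reduces to comparing the diagonal shift $i - j$ with the difference $\alpha_i - \tau_j$, and here I would exploit that $\tau \subseteq \alpha$ forces $\tau$ to be a hook as well: thus $\alpha_i, \tau_i \le 1$ for all $i \ge 2$, with $\alpha_i = 1$ exactly for $2 \le i \le l(\alpha)$ and $\tau_j = 1$ exactly for $2 \le j \le l(\tau)$.

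For the band $i \ge j+2$ I would argue as follows: here $i \ge 3$, so $\alpha_i \le 1$, whence $\alpha_i - \tau_j \le 1 < 2 \le i - j$. Thus $n_i < m_j$ and $b_{\alpha,\tau}(i,j) = 0$, which proves the first assertion and confines the nonzero entries of $B_{\alpha,\tau}$ to the indices $i \le j+1$, i.e. to the main diagonal, the strict upper part, and the first sub-diagonal.

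It then remains to analyse the sub-diagonal entries $b_{\alpha,\tau}(j+1,j)$, which by the criterion above are nonzero precisely when $\alpha_{j+1} - \tau_j \ge 1$, that is $\alpha_{j+1} > \tau_j$. I would split according to the position of $j$ relative to $l(\tau)$ and $l(\alpha)$: for $j \le l(\tau)$ one has $\tau_j \ge 1 \ge \alpha_{j+1}$, so the entry vanishes; for $j \ge l(\alpha)$ one has $\alpha_{j+1} = 0 \le \tau_j$, so it vanishes again; and for $l(\tau)+1 \le j \le l(\alpha)-1$ one has $\tau_j = 0$ and $\alpha_{j+1} = 1$, so the entry is nonzero. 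In particular, when $l(\alpha) < l(\tau)+2$ this middle range is empty, so together with the previous paragraph $B_{\alpha,\tau}$ is upper triangular. On the nonzero sub-diagonal both indices fall outside the respective hook arms, giving $n_{j+1} = \alpha_{j+1} + m - (j+1) = m - j = \tau_j + m - j = m_j$; substituting $n_{j+1} = m_j = m-j$ and using $(-(m-j))_{m-j} = (-1)^{m-j}(m-j)!$ yields the stated value $(-1)^{m-j}(m-j)!/\Gamma(r+s+2m-2j+2)$.

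The computation itself is short; the only genuine care is needed at the boundary indices $j = l(\tau)$ and $j = l(\alpha)$, and in the degenerate case $\tau = \emptyset$ (where $l(\tau) = 0$ and the index $j = 1$ must be checked separately, which is why the sub-diagonal formula is stated under the hypothesis $\tau \neq \emptyset$). I expect these edge cases, rather than any single inequality, to be the main point requiring attention, since an off-by-one error in the ranges would corrupt the product form of $\det B_{\alpha,\tau}$ that this lemma is designed to feed into the next step.
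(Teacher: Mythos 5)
Your proof is correct and follows essentially the same route as the paper: both reduce the vanishing of $b_{\alpha,\tau}(i,j)$ to the inequality $n_i<m_j$ (equivalently $\alpha_i-i<\tau_j-j$), exploit the hook structure to see that for $i>j$ this fails only when $i=j+1$, $\tau_j=0$ and $\alpha_{j+1}=1$, and then evaluate $(-(m-j))_{m-j}=(-1)^{m-j}(m-j)!$ on the surviving sub-diagonal entries. Your case analysis at the boundary indices matches the paper's three vanishing cases, so there is nothing further to add.
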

\begin{proof}
Take $i > j \geq 1$. Then, $\alpha_i-i \leq 1-i$ while $1-i <\tau_j-j$ except when $\tau_j=0$ and $i=j+1$. Consequently, $(-(\alpha_i-i+m))_{\tau_j-j+m} = 0$ in the following three cases:
\begin{enumerate}
\item $\alpha_i=0$.
\item $i \geq j+2$.
\item $i=j+1$ and $\tau_{j} \geq 1$.
\end{enumerate}
In particular, $B_{\alpha,\tau}$ is upper triangular if $l(\alpha) = l(\tau)$ or $l(\alpha) = l(\tau)+1$ since then  $\alpha_i \leq \tau_j$. Otherwise, if $l(\alpha) \geq l(\tau)+2$ then $b(j+1,j)$ vanishes except for $l(\tau)+1 \leq j \leq l(\alpha)-1$ in which case
\begin{equation*}
\alpha_{j+1}=1 > \tau_j = 0 \quad \Rightarrow \quad n_{j+1} = m_j = m-j.
\end{equation*}
\end{proof}

\subsection{Further simplifications}
According to Lemma \ref{Upper}, \eqref{MF2} is expanded as:
\begin{multline}\label{MF3}
M_n(t)=\sum_{\substack{\alpha \, \, \textrm{hook} \\ |\alpha| = n, l(\alpha) \leq m}} (-1)^{n-\alpha_1} \sum_{\tau \subseteq \alpha}\frac{e^{-\nu_\tau t}}{(\alpha_1-\tau_1)!}\prod_{1 \leq i < j \leq m}(m_i+m_j+r+s+1)(m_i-m_j)
\\ \prod_{j=1}^{l(\tau)} \frac{n_j!}{\Gamma(r+s+n_j+m_j+2)} \det \left( \frac{n_i!}{(n_i-m_j)!\Gamma(r+s+n_i+m_j+2)}\right)_{\substack{j=l(\tau)+1 \dots l(\alpha), \\ i \leq j+1}}
\\ \prod_{j=l(\alpha)+1}^m \frac{n_j!}{\Gamma(r+s+2m_j+2)}
 \prod_{j=1}^m\frac{(r+s+2m_j+1)\Gamma(r+n_j+1)\Gamma(m_j+s+1)\Gamma(r+s+m_j+1)}{\Gamma(r+m_j+1)\Gamma(m_j+1)\Gamma(m-j+s+1)\Gamma(m-j+1)},
\end{multline}
where an empty determinant or product equals one. This expression can be considerably simplified into the one below where we prove that the factors corresponding to indices $l(\alpha) + 1 \leq i,j \leq m$ cancel. To this end, we find it convenient to single out the contribution of the empty partition which corresponds to the stationary regime $t \rightarrow +\infty$. 
\begin{cor}
The moment formula \eqref{MF3} reduces to: 
\begin{multline}\label{MF4}
M_n(t) = M_n(\infty) + \sum_{\substack{\alpha \, \, \textrm{hook} \\ |\alpha| = n, l(\alpha) \leq m}} (-1)^{n-\alpha_1}  \sum_{\substack{\tau \subseteq \alpha \\ \tau \neq \emptyset}}
\frac{e^{-\nu_\tau t}}{(r+s+\tau_1+2m-l(\tau))(\tau_1+l(\tau)-1)}V_{\alpha_1, \tau_1}^{r,s,m} \\ \prod_{i=2}^{l(\tau)} \frac{(m-i+1)(m-i+s+1)}{(2m-i-l(\tau)+r+s+2)(1+l(\tau)-i)}\prod_{j=l(\tau)+1}^{l(\alpha)}(m-j+1)(r+m-j+1) 
\\ \det \left(\frac{\Gamma(r+s+2(m-j)+2)}{(n_i-m_j)!\Gamma(r+s+n_i+m_j+2)}\right)_{\substack{j=l(\tau)+1 \dots l(\alpha), \\ i \leq j+1}},
\end{multline}
where for a non empty hook $\tau \subseteq \alpha$, we set:
\begin{equation}\label{Contrib1}
V_{\alpha_1, \tau_1}^{r,s,m} := \frac{(r+s+2\tau_1+2m-1)\Gamma(\tau_1+2m+r+s)\Gamma(\alpha_1+m)\Gamma(r+\alpha_1+m)\Gamma(\tau_1+m+s)}{(\alpha_1-\tau_1)!(\tau_1-1)!\Gamma(r+s+\alpha_1+\tau_1+2m)\Gamma(m)\Gamma(r+\tau_1+m)\Gamma(m+s)}. 
\end{equation}
\end{cor}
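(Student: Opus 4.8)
The plan is to peel off the stationary contribution first and then reduce the surviving sum by a careful bookkeeping of Gamma factors organized along the block structure of the hooks. Since $\nu_\emptyset = 0$ while $\nu_\tau > 0$ for every nonempty partition, the summand of \eqref{MF3} attached to $\tau = \emptyset$ is the only time-independent one. Letting $t \to +\infty$ in the finite sum \eqref{MF3} and invoking the weak convergence $J_t \Rightarrow J_\infty$ recalled in Section 2 (together with the boundedness of the eigenvalues in $[0,1]$), the $\tau = \emptyset$ term must coincide with $\lim_{t \to \infty} M_n(t) = M_n(\infty)$. This isolates $M_n(\infty)$ without having to evaluate the corresponding summand explicitly and reduces the claim to an identity among the summands indexed by nonempty hooks $\tau \subseteq \alpha$.

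Fix such a pair and write $\alpha = (\alpha_1, 1^{l(\alpha)-1})$, $\tau = (\tau_1, 1^{l(\tau)-1})$, so that, with $m_i = \tau_i + m - i$ and $n_i = \alpha_i + m - i$, the indices split into the four blocks $i=1$ (the arm), $2 \le i \le l(\tau)$, $l(\tau)+1 \le i \le l(\alpha)$ and $l(\alpha)+1 \le i \le m$. The observation that drives everything is that, away from the arm, the value sets are nearly complete arithmetic progressions with a single gap: $\{m_i : 2 \le i \le m\} = \{0, 1, \dots, m-1\} \setminus \{m - l(\tau)\}$ and likewise $\{n_i : 2 \le i \le m\} = \{0, \dots, m-1\}\setminus\{m-l(\alpha)\}$. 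First I would use this to evaluate the product $\prod_{1\le i<j\le m}(m_i+m_j+r+s+1)(m_i-m_j)$ coming from the special value \eqref{SV}, splitting it according to the blocks of $i$ and $j$ and using the telescoping identities $\prod_{0\le v<u\le N}(u-v) = \prod_{\ell=0}^N \ell!$ and $\prod_{0\le v<u\le N}(u+v+c) = \prod_{u=1}^N \Gamma(2u+c)/\Gamma(u+c)$.

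The first genuine simplification is the cancellation advertised before the statement: the sub-product with both indices in the last block $l(\alpha)+1 \le i,j \le m$, where $m_i = m-i$, combines with $\prod_{j=l(\alpha)+1}^m n_j!/\Gamma(r+s+2m_j+2)$ and with the $j > l(\alpha)$ part of the last product in \eqref{MF3} to give exactly $1$; this is precisely the second telescoping identity applied with $c = r+s+1$ and $N = m - l(\alpha) - 1$, as one checks directly. After this, I would collect the factors carried by the remaining blocks: the diagonal contributions of the products over $2 \le j \le l(\tau)$ and $l(\tau)+1 \le j \le l(\alpha)$, the cross terms of the Vandermonde pairing these blocks with the last one, and the arm factors at $j=1$. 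Normalizing the determinant by extracting $n_i!$ from row $i$ and $\Gamma(r+s+2(m-j)+2)$ from column $j$ converts its entries into those of \eqref{MF4}, the extracted scalars joining the block-C product $\prod_{j=l(\tau)+1}^{l(\alpha)}(m-j+1)(r+m-j+1)$.

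The decisive and most delicate point is the role of the missing value $m - l(\tau)$. Treating $\{m_i : i \ge 2\}$ as the full progression $\{0,\dots,m-1\}$ and dividing out the factors that would pair this absent value with the others produces exactly the extra denominators: pairing $m-l(\tau)$ with the arm value $m_1 = \tau_1 + m - 1$ yields the scalar prefactor $(r+s+\tau_1+2m-l(\tau))(\tau_1 + l(\tau)-1)$, while pairing it with the block $2 \le i \le l(\tau)$, where $m_i = m-i+1$, yields the denominators $(2m-i-l(\tau)+r+s+2)(1+l(\tau)-i)$ appearing in $\prod_{i=2}^{l(\tau)}(\cdots)$. I expect the main obstacle to be exactly this accounting: matching, Gamma factor by Gamma factor, the surviving arm contribution against the explicit $V_{\alpha_1,\tau_1}^{r,s,m}$ of \eqref{Contrib1} and the two finite products, while keeping track of the gap corrections and of the row and column normalizations of the determinant. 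Everything else is telescoping.
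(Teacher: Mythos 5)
Your plan is correct and is essentially the paper's own argument: the same splitting of the indices into the arm $i=1$ and the blocks $2\le i\le l(\tau)$, $l(\tau)+1\le i\le l(\alpha)$, $l(\alpha)+1\le i\le m$, the same cancellation of the last block against the tail of the Gamma-product in \eqref{MF3}, and the same row/column normalization of the Hessenberg determinant. Your two departures --- identifying the $\tau=\emptyset$ summand with $M_n(\infty)$ by letting $t\to\infty$ rather than by direct evaluation, and organizing the Vandermonde bookkeeping around the single missing value $m-l(\tau)$ of the progression $\{m_i\}_{i\ge 2}$ (which does transparently produce the denominators $(r+s+\tau_1+2m-l(\tau))(\tau_1+l(\tau)-1)$ and $(2m-i-l(\tau)+r+s+2)(1+l(\tau)-i)$, though you should also account for the downward pairings of $m-l(\tau)$ with the values below it) --- are sound repackagings of the same telescoping computation, not a different route.
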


\begin{proof}
We only consider hooks $\tau$ such that $l(\tau) \geq 1$ and we proceed in three steps. In the first one, we work out the product:
\begin{align*}
\prod_{l(\alpha)+1\leq i < j \leq m}(m_i+m_j+r+s+1)(m_i-m_j) &= \prod_{i=l(\alpha)+1}^{m-1}\prod_{j=i+1}^m(2m-i-j+r+s+1)(j-i)
\\& = \prod_{i=l(\alpha)+1}^{m-1}\Gamma(m-i+1)(m-i+r+s+1)_{m-i}
\\& = \prod_{i=l(\alpha)+1}^{m}\frac{\Gamma(m-i+1)\Gamma(2m-2i+r+s+1)}{\Gamma(m-i+r+s+1)}.
\end{align*}
Since $n_j = m_j = (m-j)$ for $j \geq l(\alpha)+1$, then
\begin{multline*}
\prod_{l(\alpha)+1\leq i < j \leq m}(m_i+m_j+r+s+1)(m_i-m_j) \\ \prod_{j=l(\alpha)+1}^m \frac{n_j!}{\Gamma(r+s+2m_j+2)}
\frac{(r+s+2m_j+1)\Gamma(r+n_j+1)\Gamma(m_j+s+1)\Gamma(r+s+m_j+1)}{\Gamma(r+m_j+1)\Gamma(m_j+1)\Gamma(m-j+s+1)\Gamma(m-j+1)}
\end{multline*}
equals one. In the second step, we split the product
\begin{equation*}
\prod_{\substack{1 \leq i \leq l(\alpha) \\ i+1 \leq j \leq m}}(m_i+m_j+r+s+1)(m_i-m_j)
\end{equation*}
into
\begin{equation*}
\prod_{i=l(\tau)+1}^{l(\alpha)} \prod_{j= i+1}^m (m_i+m_j+r+s+1)(m_i-m_j)
\end{equation*}
and
\begin{equation*}
\prod_{i=1}^{l(\tau)} \prod_{j= i+1}^m (m_i+m_j+r+s+1)(m_i-m_j).
\end{equation*}
The first product is expressed as:
\begin{equation}\label{FirstProd}
\prod_{i=l(\tau)+1}^{l(\alpha)}\prod_{j= i+1}^m (2m-i-j+r+s+1)(j-i) = \prod_{i=l(\tau)+1}^{l(\alpha)} \frac{\Gamma(m-i+1)\Gamma(2m-2i+r+s+1)}{\Gamma(m-i+r+s+1)}.
\end{equation}
As to the second, it splits in turn into:
\begin{eqnarray*}
\prod_{1 \leq i < j \leq l(\tau)} (m_i+m_j+r+s+1)(m_i-m_j) & = & \prod_{i=1}^{l(\tau)} \frac{\Gamma(\tau_i +2m-2i+ r+s+2)\Gamma(\tau_i + l(\tau)-i)}{\Gamma(\tau_i+2m-i-l(\tau)+r+s+2)\Gamma(\tau_i)}\\
\prod_{i=1}^{l(\tau)} \prod_{j= l(\tau)+1}^m (m_i+m_j+r+s+1)(m_i-m_j) & = & \prod_{i=1}^{l(\tau)} \frac{\Gamma(\tau_i+2m-i-l(\tau)+r+s+1)\Gamma(\tau_i+m-i+1)}{\Gamma(\tau_i+m-i+r+s+1)\Gamma(\tau_i+l(\tau)-i+1)}
\end{eqnarray*}
yielding
\begin{multline*}
\prod_{i=1}^{l(\tau)} \prod_{j= i+1}^m (m_i+m_j+r+s+1)(m_i-m_j) = \\ 
 \prod_{i=1}^{l(\tau)}\frac{\Gamma(\tau_i+2m-2i+r+s+2)\Gamma(m_i+1)}{(\tau_i+2m-i-l(\tau)+r+s+1)(\tau_i+l(\tau) -i)\Gamma(\tau_i)\Gamma(m_i+r+s+1)}.
\end{multline*}
Since $n_j = 1+m-j= m_j+1$ for $l(\tau)+1 \leq j \leq l(\alpha)$, then \eqref{FirstProd} implies that
\begin{multline*}
\left[\prod_{i=l(\tau)+1}^{l(\alpha)}\prod_{j= i+1}^m (2m-i-j+r+s+1)(j-i)\right]
\\ \prod_{j=l(\tau)+1}^{l(\alpha)}\frac{(r+s+2m_j+1)\Gamma(n_j+1)\Gamma(r+n_j+1)\Gamma(m_j+s+1)\Gamma(r+s+m_j+1)}{\Gamma(r+m_j+1)\Gamma(m_j+1)\Gamma(m-j+s+1)\Gamma(m-j+1)},
\end{multline*}
reduces to
\begin{equation}\label{Prod2}
\prod_{j=l(\tau)+1}^{l(\alpha)}\Gamma(r+s+2(m-j)+2)(m-j+1)(r+m-j+1).
\end{equation}
Moreover, $n_j = m_j = 1+m-j$ when $2 \leq j \leq l(\tau)$ whence
\begin{multline}\label{Prod1}
\left[\prod_{i=2}^{l(\tau)} \prod_{j= i+1}^m (m_i+m_j+r+s+1)(m_i-m_j)\right] \times \\ \prod_{i=2}^{l(\tau)} \frac{(r+s+2m_j+1)\Gamma(r+n_j+1)\Gamma(n_j+1)\Gamma(m_j+s+1)\Gamma(r+s+m_j+1)}{\Gamma(r+s+n_j+m_j+2)\Gamma(r+m_j+1)\Gamma(m_j+1)\Gamma(m-j+s+1)\Gamma(m-j+1)} \\ = \prod_{i=2}^{l(\tau)} \frac{(m-i+1)(m-i+s+1)}{(2m-i-l(\tau)+r+s+2)(1+l(\tau)-i)}.
\end{multline}
 Finally, the contribution of the terms corresponding to $(\alpha_1, \tau_1)$ is given by:
 \begin{equation}\label{FirstRow}
\frac{(r+s+2\tau_1+2m-1)\Gamma(\alpha_1+m)\Gamma(r+\alpha_1+m)\Gamma(\tau_1+m+s)\Gamma(\tau_1+2m+r+s)}{(r+s+\tau_1+2m-l(\tau))\Gamma(r+s+\alpha_1+\tau_1+2m)\Gamma(m)\Gamma(r+\tau_1+m)\Gamma(m+s)\Gamma(\tau_1)(\tau_1+l(\tau)-1)}.
 \end{equation}
Gathering \eqref{Prod2}, \eqref{Prod1}, \eqref{FirstRow} and keeping in mind \eqref{MF3}, we are done.
\end{proof}

\subsection{An auxiliary determinant: end of the proof}
In this paragraph, we end the proof of Theorem \ref{teo} after expressing the determinant of the submatrix 
\begin{equation*}
(b_{\alpha, \tau}(i,j))_{i,j = l(\tau)+1}^{l(\alpha)},
\end{equation*}
when it is not empty, in a product form. This expression is stated in the following lemma:  
\begin{lem}\label{Determinant}
Let $\tau$ be a hook of length $l(\tau) \geq 1$ and let $\alpha \supset \tau$ be a hook such that $l(\alpha) \geq l(\tau) + 1$. Then
\begin{equation*}
\det \left(\frac{\Gamma(r+s+2(m-j)+2)}{(n_i-m_j)!\Gamma(r+s+n_i+m_j+2)}\right)_{\substack{j=l(\tau)+1 \dots l(\alpha), \\ i \leq j+1}}
= \frac{1}{(l(\alpha)- l(\tau))!}\prod_{j= l(\tau)+1}^{l(\alpha)}\frac{1}{ r + s + 2m - l(\tau)+1 -j}
\end{equation*}
\end{lem}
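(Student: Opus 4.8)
The plan is to reduce the displayed determinant to an explicit numerical matrix, recognize it as upper Hessenberg with unit subdiagonal, and then evaluate it by induction on its size using a finite-difference identity.

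First I would insert the hook values of $n_i$ and $m_j$. Since $l(\tau)\geq 1$, every index $i$ with $l(\tau)+1\leq i\leq l(\alpha)$ has $\alpha_i=1$, so $n_i=m-i+1$, while $\tau_j=0$ gives $m_j=m-j$ for $l(\tau)+1\leq j\leq l(\alpha)$. Hence $n_i-m_j=j-i+1$, and writing $a=i-l(\tau)$, $b=j-l(\tau)$, $L=l(\alpha)-l(\tau)$ and $D:=r+s+2m-2l(\tau)$, the matrix in the lemma becomes the $L\times L$ matrix with entries
\[
\beta(a,b)=\frac{\Gamma(D-2b+2)}{(b-a+1)!\,\Gamma(D-a-b+3)},\qquad 1\leq a,b\leq L.
\]
A direct check of special values shows $\beta(a,a)=1/(D-2a+2)$, $\beta(a+1,a)=1$ for every $a$, and $\beta(a,b)=0$ whenever $a\geq b+2$ (the reciprocal Gamma factor then hitting a pole, in accordance with Lemma \ref{Upper}). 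Thus $\beta$ is upper Hessenberg with all subdiagonal entries equal to $1$, and the claim is equivalent to
\[
d_L:=\det(\beta(a,b))_{a,b=1}^L=\frac{\Gamma(D-L+1)}{L!\,\Gamma(D+1)}=\frac{1}{L!}\prod_{b=1}^L\frac{1}{D+1-b},
\]
the last equality being just the translation back to $j=b+l(\tau)$.

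Next I would exploit the Hessenberg shape. Expanding $\det(\beta)$ along its last column, the minor obtained by deleting row $k$ and column $n$ factors into the top-left $(k-1)\times(k-1)$ leading principal block times a lower-triangular block (rows $k+1,\dots,n$, columns $k,\dots,n-1$) whose diagonal is the subdiagonal of $\beta$, i.e. all $1$'s. This yields the recursion
\[
d_n=\sum_{k=1}^n(-1)^{n-k}\,\beta(k,n)\,d_{k-1},\qquad d_0=1,
\]
where $d_k$ is the leading principal minor of size $k$, which crucially carries the same parameter $D$. I would then prove $d_n=\Gamma(D-n+1)/(n!\,\Gamma(D+1))$ by induction on $n$. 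Substituting the inductive hypothesis for $d_{k-1}$ and the explicit $\beta(k,n)$, and setting $l=k-1$, the step reduces to the identity
\[
\Gamma(D-2n+2)\sum_{l=0}^{n-1}\frac{(-1)^{n-1-l}}{l!\,(n-l)!}\,F(l)=\frac{\Gamma(D-n+1)}{n!},\qquad F(l):=\frac{\Gamma(D-l+1)}{\Gamma(D-l-n+2)}.
\]

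The key observation, and the only genuine obstacle, is that $F(l)=(D-l)(D-l-1)\cdots(D-l-n+2)$ is a polynomial in $l$ of degree $n-1$. Writing $1/(l!\,(n-l)!)=\binom{n}{l}/n!$ and recognizing the alternating sum over the full range $0\leq l\leq n$ as the $n$-th forward difference $\Delta^nF(0)=\sum_{l=0}^n(-1)^{n-l}\binom{n}{l}F(l)$, which vanishes since $\deg F<n$, I can replace the truncated sum by minus its missing term $l=n$. This gives $\sum_{l=0}^{n-1}(-1)^{n-1-l}\binom{n}{l}F(l)=F(n)=\Gamma(D-n+1)/\Gamma(D-2n+2)$, exactly the required identity. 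The base case $d_0=1$ (equivalently $d_1=1/D$) is immediate, so the induction closes, and unwinding the substitutions $D=r+s+2m-2l(\tau)$ and $b=j-l(\tau)$ recovers the product in the statement. I expect the bookkeeping in passing from the recursion to the polynomial $F$ to be the delicate point; once $F$ is seen to have degree $n-1$, the finite-difference collapse is automatic.
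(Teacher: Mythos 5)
Your proof is correct, and it takes a genuinely different route from the paper's. Both arguments first reduce the determinant to the same explicit upper Hessenberg matrix with unit subdiagonal and diagonal entries $1/(D-2a+2)$, but from there the paper proceeds by Gaussian elimination: it performs the row operations $R_{k+1}\to R_{k+1}-\tfrac{1}{a_{kk}}R_k$, verifies a closed form for each modified row, and reads the determinant off as the product of the successive pivots $\tfrac{1}{k(D+1-k)}$. You instead expand along the last column to get the standard Hessenberg recursion $d_n=\sum_{k=1}^n(-1)^{n-k}\beta(k,n)d_{k-1}$ (your complementary block is upper, not lower, triangular in the local indexing, but its diagonal is still the unit subdiagonal, so nothing changes) and close a strong induction by observing that the resulting alternating sum is the $n$-th forward difference of the polynomial $F(l)=(D-l)\cdots(D-l-n+2)$ of degree $n-1$, truncated at $l=n-1$; since $\Delta^nF\equiv 0$, the truncated sum equals the single missing term $F(n)=\Gamma(D-n+1)/\Gamma(D-2n+2)$, which is exactly what is needed. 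I checked the identity and the small cases ($d_1=1/D$, $d_2=1/(2D(D-1))$) and everything is consistent with the stated product. The trade-off: your version isolates a clean, independently checkable combinatorial fact (finite differences annihilate polynomials of lower degree) and avoids tracking entries through successive eliminations, at the cost of setting up a strong induction in which the leading principal minors must be recognized as instances of the same formula with the same $D$ — a point you rightly flag. The paper's remark also records a third route via Krattenthaler's determinant evaluation, so your argument adds a further independent proof rather than duplicating either of the existing ones.
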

\begin{proof}
When $l(\alpha) \geq l(\tau)+1 \geq 2$, then $\alpha_i = 1, n_i = m-i+1 = m_i+1$ so that
\begin{multline*}
\det \left(\frac{\Gamma(r+s+2(m-j)+2)}{(n_i-m_j)!\Gamma(r+s+n_i+m_j+2)}\right)_{\substack{j=l(\tau)+1 \dots l(\alpha), \\ i \leq j+1}} = \\ 
\det \left(\frac{\Gamma(r+s+2(m-j)+2)}{(j-i+1)!\Gamma(r+s+2m-i-j+3)}\right)_{\substack{j=l(\tau)+1 \dots l(\alpha), \\ i \leq j+1}}.
\end{multline*}
Set 
\begin{equation*}
N := r+s+ 2m+2, \qquad  L:= l(\alpha)- l(\tau),
\end{equation*}
and for every $i,j\in \{l(\tau)+1, \cdots, l(\alpha)\}$ set also: 
$$
a_{i- l(\tau), j- l(\tau)} := \frac{\Gamma(N-2j)}{(j-i+1)! \Gamma(N-i-j+1)}{\bf 1}_{\{i\le j+1\}}.
$$
Then, the determinant we need to compute is 
$$
\det[a_{k,l}]_{k, l = 1}^L= 
\left|
            \begin{array}{ccccc}
              a_{11} & a_{12} & \cdots  & & a_{1L}  \\
              1 & a_{22} & a_{23} & \cdots  & a_{2L} \\
              0 & 1 & \ddots & \ddots & \vdots \\
              \vdots & \ddots & \ddots & \ddots &  \\
              0 & \cdots & 0 & 1 & a_{L L} \\
            \end{array}
          \right|,
$$ 
where
$$
a_{kk} = \frac{1}{N-2k -2l(\tau)}; \quad k \in \{ 1,\cdots, L\}.
$$
Using the row operation 
\begin{equation*}
R_2 \longrightarrow R_2 - \frac{1}{a_{11}}R_1,
\end{equation*}
one gets 
$$
\det[a_{k,l}]_{k, l = 1}^L = \frac{1}{N -2l(\tau)-2} \left|
            \begin{array}{ccccc}
               a^{'}_{22} & a^{'}_{23} & \cdots &  & a^{'}_{2L} \\
               1 &  a^{'}_{33}  & \ddots & \ddots & \vdots \\
               0 & \ddots & \ddots & \ddots &  \\
               \vdots & \ddots & \ddots & \ddots &  \\
             0 &\cdots & 0 & 1 & a^{'}_{L L} \\
            \end{array}
          \right|$$
where 
\begin{equation*}
a^{'}_{kl} = \left\{
                      \begin{array}{ll}
                        a_{kl}- \displaystyle \frac{a_{(k-1) l}}{a_{11}}, & \hbox{if $k = 2$;} \\
                        a_{kl}, & \hbox{otherwise.}
                      \end{array}
                    \right.
 \end{equation*}
More explicity,
 \begin{equation*}
a^{'}_{2l} = \frac{\Gamma(N-2l(\tau)-2l)}{l(N-2l(\tau)-l-1)(l-2)!\Gamma(N-2l(\tau)-l-2)} = \frac{1}{l(N-2l(\tau)-l-1)}a_{3l}, \quad l \geq 2.
 \end{equation*}
In particular, 
 \begin{equation*}
 a^{'}_{22} = \frac{1}{2( N-2l(\tau)-3)}, \quad  a^{'}_{23} = \frac{1}{3(N-2l(\tau)-4)(N-2l(\tau)-6)}.
 \end{equation*}
Now, the second row operation: 
\begin{equation*}
R_3 \longrightarrow R_3 - \frac{1}{a_{11}}R_2,
\end{equation*}
transforms the matrix third row into:
\begin{equation*}
\frac{\Gamma(N-2l(\tau)-2l)!}{l(N-2l(\tau)-l-1)(l-3)!\Gamma(N-2l(\tau)-l-3)} =  \frac{1}{l(N-2l(\tau)-l-1)}a_{4l}, \quad l \geq 3. 
\end{equation*}
Iterating these row operations, one gets: 
\begin{align*}
 \det[a_{k,l}]_{k, l = 1}^L  = \prod_{k=1}^{L}\frac{1}{k(N- 2l(\tau)-k-1)} =  \frac{1}{(l(\alpha)- l(\tau))!}\prod_{k= l(\tau)+1}^{l(\alpha)}\frac{1}{ r + s + 2m - l(\tau)+1 -k},
\end{align*}
as claimed.
\end{proof}

\begin{rem}
The determinant computed in the previous lemma may be written as: 
\begin{equation*}
\prod_{j=1}^{l(\alpha)-l(\tau)} \Gamma(r+s+2m-2\l(\tau)-2j+2)\det \left(\frac{1}{(j-i+1)!\Gamma(r+s+2m-2l(\tau)+2 -i-j)!}\right)_{i,j=1}^{l(\alpha)-l(\tau)}, 
\end{equation*}
with the convention that $(j-i+1)! = \infty$ when $j-i+1 < 0$. 
On the other hand, if $A$ and $L_i, 1 \leq i \leq L,$ are indeterminates then the following identity holds (take $B=2$ in \cite{Kra}, Theorem 26, eq. (3.13)): 
\begin{equation*}
\det\left(\frac{1}{(L_i+j)!(A+L_i-j)!}\right)_{i,j=1}^L = \prod_{1 \leq i < j \leq L}(L_i-L_j) \prod_{i=1}^L\frac{(A-2i+1)_{i-1}}{(L_i+L)!(L_i+A-1)!}.
\end{equation*}
Choosing $L_i = -i+1, A = r+s+2m-2l(\tau)+1$, and recalling $L = l(\alpha)-l(\tau)$, one gets another proof of the previous lemma after some simplifications. The authors thank the anonymous referee for this hint. 
\end{rem}
\begin{proof}[End of the proof of the main result]
With the help of lemma \ref{Determinant}, the formula \eqref{MF4} is written as:
\begin{multline*}
M_n(t) = M_n(\infty) + \sum_{\substack{\alpha \, \, \textrm{hook} \\ |\alpha| = n, l(\alpha) \leq m}} (-1)^{n-\alpha_1}  \sum_{\substack{\tau \subseteq \alpha \\ \tau \neq \emptyset}} 
\frac{e^{-\nu_\tau t}V_{\alpha_1, \tau_1}^{r,s,m}}{(r+s+\tau_1+2m-l(\tau)) (\tau_1+l(\tau)-1)(l(\alpha)- l(\tau))!(l(\tau)-1)!} 
\\ \prod_{i=2}^{l(\tau)} \frac{(m-i+1)(m-i+s+1)}{(2m-i-l(\tau)+r+s+2)} \prod_{j=l(\tau)+1}^{l(\alpha)}\frac{(m-j+1)(r+m-j+1)}{ r + s + 2m - l(\tau)+1 -j}. 
\end{multline*}
Now, the products appearing in the RHS may be expressed through the Gamma function as: 
\begin{equation*}
\prod_{i=2}^{l(\alpha)}(m-i+1) = \frac{\Gamma(m)}{\Gamma(m-l(\alpha)+1)}, 
\end{equation*}
\begin{equation*}
\prod_{i=2}^{l(\tau)}\frac{(m-i+s+1)}{(2m-i-l(\tau)+r+s+2)} = \frac{\Gamma(m+s)\Gamma(2m+r+s+2-2l(\tau))}{\Gamma(m+s+1-l(\tau))\Gamma(2m+r+s+1-l(\tau))}, 
\end{equation*}
\begin{equation*}
\prod_{j=l(\tau)+1}^{l(\alpha)}\frac{(r+m-j+1)}{ r + s + 2m - l(\tau)+1 -j} = \frac{\Gamma(m+r+1-l(\tau)) \Gamma(2m+r+s+1-l(\tau)-l(\alpha))}{\Gamma(m+r+1-l(\alpha))\Gamma(2m+r+s+1-2l(\tau))}.
\end{equation*}
Finally, we appeal to \eqref{Contrib1} to obtain \eqref{FinalForm}. Theorem \ref{teo} is proved. 
\end{proof}

\begin{rem}
Recall the notations: 
\begin{equation*}
r = p-m, \quad s = d-p-m = q-m. 
\end{equation*}
Then 
\begin{align*}
\tilde{V}_{\alpha_1, \tau_1}^{r,s,m} = \frac{(d+2\tau_1-1)\Gamma(d+\tau_1)\Gamma(\alpha_1+m)\Gamma(p+\alpha_1)\Gamma(q+\tau_1)}{(\alpha_1-\tau_1)!\Gamma(d+\alpha_1+\tau_1)\Gamma(p+\tau_1)\Gamma(\tau_1)},
\end{align*}
and 
\begin{align*}
U_{l(\alpha), l(\tau)}^{r,s,m} : = \frac{(d+1-2l(\tau))\Gamma(d-l(\alpha)-l(\tau)+1)\Gamma(p-l(\tau)+1)}{(l(\alpha)- l(\tau))!(l(\tau)-1)!\Gamma(m-l(\alpha)+1)\Gamma(p-l(\alpha)+1)\Gamma(q-l(\tau)+1)\Gamma(d-l(\tau)+1)}.
\end{align*}
\end{rem}

\section{Further perspectives}
So far, we computed the moments of the Hermitian Jacobi process. In this section, we discuss two perspectives which we think they worth being developed in future research works. The first perspective is concerned with a possible application to optical fibers MIMO channel if one takes into account the time variation of the communication system. The second one is rather motivated by the random-matrix approach to free probability theory. More precisely, the marginal of the Hermitian Jacobi process at any fixed time $t > 0$ converges strongly as $m \rightarrow \infty$ to the so-called free Jacobi process (\cite{CDK}) and the moments of the spectral measure of the latter were determined for equal projections in \cite{Dem-Ham}. As a matter of fact, it would be quite interesting to determine the large $m$-limit of $M_n(t)$ (after rescaling the parameters $r = r(m), s= s(m), d= d(m)$ in order to get a non trivial limit) in order to generalize and to give another proof of the expression derived in \cite{Dem-Ham}. In this respect, we shall assume $m \geq n$ (or $m$ large enough) and write the moment formula as a linear combination of terminating and balanced ${}_4F_3$-series evaluated at unit argument. Though this hypergeometric series obeys Whipple's transformation (see e.g. \cite{AAR}, Theorem 3.3.3), we do not succeed to derive a closed formula for it which would certainly open the way to investigate the large $m$-limit of $M_n(t)$. 

\subsection{Application to Optical fibers MIMO channels}
In \cite{DFS}, the authors used the JUE to model an optical fibers MIMO channel. Actually, the transfer matrix of this model is a truncation of a Haar unitary matrix and reflects the situation when only a part of the modes in the fiber is used. If we further take into account the time variation of the transfer matrix, then a natural dynamical candidate for modeling an optical fibers MIMO channel with $m$ antennas at the receiver and $p$ antennas at the transmitter would be a $m \times p$ truncation of a $d \times d$ unitary Brownian motion. In this case, the statistical behavior of the channel is governed by the eigenvalues of the Hermitian Jacobi process. In particular, the unitary invariance of $J_t$ for fixed time $t$ implies that the Shannon capacity of the channel is given by (we assume that the Gaussian noise is centered and have identity covariance matrix, \cite{Tel}):   
\begin{equation*}
C_t(m, p, d, \rho) :=\mathbb{E}\left[\log\det\left(\textrm{Id}_{m \times m} + \frac{\mathcal{P}}{p}J_t \right)\right]
\end{equation*}
where $\mathcal{P}$ is the total power at the transmitter and $\rho:= \mathcal{P}/p$. If $\rho \leq 1$ then the capacity is expanded as: 
\begin{align*}
C_t(m, p, d, \rho) &:=  \mathbb{E}\left[\sum_{k=1}^m \log\left(1+\rho \lambda_k\right)\right] = \sum_{n=1}^{\infty}\frac{(-\rho)^n}{n}M_n(t)
\\& = C_{\infty} (m, p, d, \rho) + \sum_{\substack{\alpha \, \textrm{hook} \\ 1 \leq l(\alpha) \leq m}} \sum_{\substack{\tau \subseteq \alpha \\ \tau \neq \emptyset}} \frac{(-1)^{\alpha_1}\rho^{|\alpha|}}{|\alpha|} 
\frac{e^{-\nu_\tau t} \, \tilde{V}_{\alpha_1, \tau_1}^{r,s,m} \, U_{l(\alpha), l(\tau)}^{r,s,m} }{(r+s+\tau_1+2m-l(\tau)) (\tau_1+l(\tau)-1)} 
\end{align*}
where $C_{\infty} (m, p, d, \rho)$ is the capacity of a channel drawn from the JUE (\cite{DFS}, \cite{DN}). Of course, the condition $\rho \leq 1$ is redundant since only needed to expand the logarithm into power series. In a future research work, we shall work out the expression of $C_t(m, p, d, \rho)$ and get rid of this condition. 

\subsection{The large $m$-limit} 
Reversing the summation order in \eqref{FinalForm}, we shall fix a hook 
\begin{equation*}
\tau = (h-j, 1^j), \quad 0 \leq j \leq h-1, 
\end{equation*}
of weight $1 \leq h=|\tau| \leq n$ then sum over hooks 
\begin{equation*}
\alpha = (n-k, 1^k), \quad j \leq k \leq j+n-h,
\end{equation*}
of weight $n$ and containing $\tau$. Doing so and extracting the terms depending only on $\alpha$, we are led lead to the following alternating sum:  
\begin{equation*}
\sum_{k = j}^{j+n-h} \frac{(-1)^{k}\Gamma(n-k+m)\Gamma(p+n-k)\Gamma(d-k-j-1)}{(n-h+j-k)!(k- j)!\Gamma(m-k)\Gamma(p-k)\Gamma(d+n-k+h-j)}.
\end{equation*}
Performing the index change $k \mapsto n-h+j-k$ there, we transform this sum into:
\begin{equation*}
(-1)^{n-h+j}\sum_{k=0}^{n-h}\frac{(-1)^{k}\Gamma(h-j+m+k)\Gamma(p+h-j+k)\Gamma(d-n+h-2j-1+k)}{k! (n-h-k)!\Gamma(m+h-n-j+k)\Gamma(p+h-n-j+k)\Gamma(d+k+2h-2j)}.
\end{equation*}
Up to Gamma factors which do not depend on $k$, this sum may be expressed as a terminating ${}_4F_3$ hypergeometric series at unit argument: 
\begin{multline*}
\frac{(-1)^{n-h+j}\Gamma(h-j+m)\Gamma(p+h-j)\Gamma(d-n+h-2j-1)}{(n-h)!\Gamma(d+2h-2j)\Gamma(m+h-n-j)\Gamma(p+h-n-j))} \\
{}_4F_3\left(\begin{matrix} -(n-h), m+h-j, p+h-j, d-n+h-2j-1; \\ m-n+h-j, p-n+h-j, d+2h-2j; \end{matrix}, 1\right),
\end{multline*}
which is balanced (one plus the sum of the upper parameters equal to the sum of the lower ones). 

\section*{Acknowledgments}
The authors gratefully acknowledge Qassim University, represented by the Deanship of Scientific Research, on the financial support for this research under the number (cba-2019-2-2-I-5394) during the academic year 1440 AH / 2019 AD.

\end{document}